\theoremstyle{plain}
\newtheorem{theorem}{Theorem}
\newtheorem{prop}{Proposition}
\newtheorem{ass}{Assumption}
\newtheorem{alg}{Algorithm}
\theoremstyle{remark}
\newcommand{\beq}{\begin{equation}}
\newcommand{\eeq}{\end{equation}}
\newcommand{\beql}[1]{\begin{equation}\label{#1}}
\newcommand{\beal}[1]{\begin{equation}\label{#1}\begin{aligned}}
\newcommand{\eeal}{\end{aligned}\end{equation}}
\newcommand{\ben}{\begin{eqnarray}}
\newcommand{\een}{\end{eqnarray}}
\newcommand{\nn}{\nonumber}
\newcommand{\dd}{\mathrm{d}}
\newcommand{\D}{D}
\newcommand{\Beq}{\begin{equation} \left\{ \begin{array}{rcll}}
\newcommand{\Eeq}[1]{ \end{array} \right.\label{#1}\end{equation}}
\newcommand{\logmessage}[1]{\@latex@warning{#1}}
\begin{document}


\title{Determining anisotropic conductivity using diffusion tensor imaging data in magneto-acoustic tomography with magnetic induction \thanks{}}

\author{Habib Ammari\thanks{\footnotesize Department of Mathematics,
ETH Z\"urich,
R\"amistrasse 101, CH-8092 Z\"urich, Switzerland (habib.ammari@math.ethz.ch). } \and
Lingyun Qiu\thanks{\footnotesize Institute for Mathematics and its Applications, University of Minnesota, Minneapolis, MN 55455 (qiu.lingyun@ima.umn.edu). } \and
Fadil Santosa\thanks{\footnotesize Institute for Mathematics and its Applications, University of Minnesota, Minneapolis, MN 55455 (santosa@ima.umn.edu).}\and
Wenlong Zhang\thanks{\footnotesize Department of Mathematics and Applications,
Ecole Normale Sup\'erieure, 45, rue d'Ulm, 75230 Paris Cedex 05, France
( wenlong.zhang@ens.fr).}}

\date{}
\maketitle

\begin{abstract}
In this paper we present a mathematical and numerical framework for a procedure of imaging anisotropic electrical conductivity tensor by integrating magneto-acoutic tomography with data acquired from diffusion tensor imaging.   Magneto-acoustic Tomography with Magnetic Induction (MAT-MI) is a hybrid, non-invasive medical imaging technique to produce conductivity images with improved spatial resolution and accuracy.
Diffusion Tensor Imaging (DTI) is also a non-invasive technique for characterizing the diffusion properties of water molecules in tissues.  We propose a model for anisotropic conductivity in which the conductivity is proportional to the diffusion tensor.  Under this assumption, we propose an optimal control approach for reconstructing the anisotropic electrical conductivity tensor. We prove convergence and Lipschitz type stability of the algorithm and present numerical examples to illustrate its accuracy and feasibility.
\end{abstract}

\bigskip


\noindent {\footnotesize Keywords: hybrid imaging, magneto-acoustic tomography with magnetic induction, diffusion tensor imaging, anisotropic conductivity.}

\section{Introduction}
\numberwithin{equation}{section}

In this paper, we describe a method of reconstructing images of an anisotropic
conductivity tensor distribution inside an electrically conducting object using
Magneto-acoustic Tomography with Magnetic Induction (MAT-MI).
MAT-MI is a new noninvasive modality for imaging electrical conductivity  distributions of biological tissue \cite{He05,He14-2}. In the experiments, the biological tissue is placed in a static magnetic field. A pulsed magnetic field is applied to induce an eddy current inside the conductive tissue. In the process, the tissue emits ultrasound waves which can be measured around the tissue. The first step in the MAT-MI is to recover the acoustic source in the scalar wave equation from measurements at a set of locations around the object.
This problem has been studied in many works, including \cite{Finch09,Hal05,Hri08,Kuch08,Stefanov}. The second step in the MAT-MI is to reconstruct the electrical conductivity distribution from knowledge of the acoustic source.

Biological tissues are known to
have anisotropic conductivity values \cite{schwan, Tuch98}.  However, up to
now, all MAT-MI techniques have been devised to image isotropic conductivity distribution.
The fundamental questions arising in this case have been addressed in
\cite{Ammari15,Qiu2015}.  However, for the case of anisotropic conductivity, many basic questions remain.  For
instance, it is not known how many MAT-MI measurements are needed to uniquely determine the conductivity
tensor.

DTI is a non-invasive technique for characterizing the diffusion properties of water molecules in tissues (see e.g. \cite{Lebihan86} and the references therein). Imaging conductivity tensors in the tissue with DTI is based on the correlation property between diffusion and conductivity tensors \cite{Tuch98}. This linear relationship can be used to characterize the conductivity tensor. Once the conductivity directions of anisotropy are determined, one needs only to reconstruct a cross-property factor which is a scalar function. In \cite{Nachman13, woo}, it is shown how to recover this factor in Current Density Impedance Imaging. In \cite{timothee}, a multifrequency electrical impedance approach is developed for  estimating the ratio between the largest and the lowest eigenvalue of the electrical conductivity tensor.  An iterative procedure for reconstructing anisotropic conductivities from internal current densities has been proposed in \cite{seo}.

In the process of the MAT-MI experiment, the tissue is placed in a constant static magnetic
background field $B_0 = (0, 0, 1)$. A pulsed magnetic stimulation of the form $B_1u(t)$ is applied, where the vector field $B_1$ is constant and $u(t)$ is the time variation. Let $\gamma$ denote the conductivity, $E_{\gamma}$
 denote electric field, and $\Omega$ be the domain to be imaged. Then the electric field
satisfies the following Maxwell equations
\beq
\label{eqn:MAT-MI}
\left\{
\begin{array}{lcll}
\nabla\times E_{\gamma} &=& B_1 \quad&\mbox{in } \Omega, \\
\nabla\cdot (\gamma E_{\gamma}) &=& 0 \quad &\mbox{on } \Omega, \\
\gamma E_{\gamma} \cdot \nu &= & 0 \quad &\mbox{on } \partial \Omega.
\end{array} \right.
\eeq
The second step of MAT-MI is to reconstruct $\gamma$ from the known internal data $\nabla\cdot (\gamma E_{\gamma}\times B_0)$ on $\Omega$.

In this paper, we will consider the anisotropic conductivity $\gamma$. We assume that DTI has been performed on the tissue being imaged, that is, the diffusion tensor $D(x)$ has been found.  Next we follow \cite{Tuch98} and make
the assumption that conductivity is proportional to $D(x)$ and assume that the conductivity tensor $\gamma$ is of the form
\beq
\gamma(x)=\sigma(x) D(x) .
\eeq
The cross-property factor $\sigma$ is a scalar function to be reconstructed.  We will focus on the second step of MAT-MI combined with DTI, i.e., on reconstructing the cross-property factor $\sigma$ from the internal data given by $\nabla\cdot (\gamma E_{\gamma}\times B_0)$ with known conductivity tensor $D(x)$.

In the following, we assume that $D(x)$ is a positive definite symmetric matrix everywhere and write it as
$D=T^{'} \Sigma T$, where $D=\mathrm{diag}(e_1,e_2,e_3)$,
$e_1\geq e_2\geq e_3$ are the eigenvalues of $D(x)$. The columns of $T^{'}$ are the corresponding eigenvectors. As we can always write
$\sigma=\sigma_0e_1T^{'} \mathrm{diag}(1,e_2/e_1,e_3/e_1)T$,
we assume that $e_1=1$ hereinafter.

\section{Notation and preliminaries}
\label{sec:prelim}
In this section, we introduce the notation for the mathematical analysis.
Let $\Omega$ be a bounded Lipschitz domain in $\mathbb{R}^3$. A typical point $x=(x_1,x_2,x_3)\in\mathbb{R}^3$
denotes the spatial variable. Throughout this paper, the standard notation for H\"older and Sobolev spaces and their norms is used. If there is no confusion, we omit the dependence on the domain.
\begin{ass}\label{asmp:sigma}
   Let $\sigma$ and $D$ be positive functions belonging to $C^{1,\beta}$, $\beta>0$ and assume that
\begin{equation}
  c_1\leq \sigma(x) \leq c_2, \quad \forall x\in \Omega,
  \end{equation}
  and
\beq\label{ellipticity}
  \lambda\|\xi\|_2^2 \leq\xi^{'}D\xi\leq\|\xi\|_2^2,\quad \forall \xi\in R^3,
\eeq
for some constants $\lambda,c_1,c_2 >0$.
\end{ass}

Here we state several useful results on elliptic partial differential equations with Neumann boundary conditions.

\begin{def}\label{def:solution}
We say that $u\in H^1$ is a weak solution of the Neumann boundary value problem
  \begin{equation}\label{eqn:N-bd}
    \left\{
     \begin{array}{lll}
   \nabla \cdot (\sigma D\nabla u) & = - \nabla \cdot {E}, &\qquad \mbox{ in } \Omega,
  \\
     (\sigma D\nabla u + {E}) \cdot \nu & =  0 , &\qquad \mbox{ on } \partial\Omega,
  \end{array}
    \right.
  \end{equation}
  if
  \beq\label{def:solution}
  \int_{\Omega} \sigma D\nabla u \cdot \nabla \varphi \,\dd x = -\int_{\Omega} {E} \cdot \nabla \varphi \,\dd x, \quad \forall \varphi \in H^1.
  \eeq
\end{def}

\medskip\medskip

We give a brief proof of the following regularity result and standard energy estimate.

\begin{prop}\label{prop:reg}
Suppose that $\sigma$ and $D$ satisfy Assumption~\ref{asmp:sigma}.
For field ${E}\in L^2$, the Neumann problem \eqref{eqn:N-bd} has a solution $u\in H^1$.
The solution $u$ is unique up to an additive constant and satisfies the estimate
  \begin{equation}\label{grad-est}
    \|\nabla u\|_{L^2} \leq c_1^{-1}\lambda^{-1} \|{E}\|_{L^2}.
  \end{equation}
\end{prop}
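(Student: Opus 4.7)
The plan is to apply the Lax--Milgram theorem on the quotient space $H^1/\mathbb{R}$ (equivalently, on the closed subspace $\{u\in H^1 : \int_\Omega u\,\dd x=0\}$), and then to derive the energy estimate by testing the weak formulation against the solution itself. Assumption~\ref{asmp:sigma} supplies exactly the ingredients needed: uniform pointwise bounds on $\sigma$ and uniform ellipticity of $D$.

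First I would introduce the bilinear form
\[
  a(u,v) := \int_\Omega \sigma\, D\nabla u\cdot \nabla v\,\dd x,
  \qquad u,v\in H^1,
\]
and the linear functional $F(\varphi):=-\int_\Omega E\cdot\nabla\varphi\,\dd x$. Boundedness of $a$ follows from Cauchy--Schwarz combined with $\sigma\le c_2$ and $\xi^{'}D\xi\le\|\xi\|_2^2$; boundedness of $F$ is immediate from $E\in L^2$. For coercivity, the key point is that constants lie in the kernel of $a$, so coercivity cannot hold on all of $H^1$; working on the mean-zero subspace $\dot H^1 := \{u\in H^1:\int_\Omega u=0\}$, the Poincar\'e--Wirtinger inequality together with $\sigma\ge c_1$ and the ellipticity lower bound $\xi^{'}D\xi\ge \lambda\|\xi\|_2^2$ gives
\[
  a(u,u) \ge c_1\lambda\,\|\nabla u\|_{L^2}^2 \ge C\|u\|_{H^1}^2
\]
for $u\in\dot H^1$. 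Note also that the problem is automatically compatible: testing \eqref{def:solution} against a constant yields $0=0$, so no extra condition on $E$ is needed.

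By Lax--Milgram there exists a unique $u\in\dot H^1$ satisfying \eqref{def:solution} for all $\varphi\in\dot H^1$; since adding a constant to $\varphi$ does not change either side, the identity holds for all $\varphi\in H^1$, and clearly any two $H^1$-solutions differ by a constant. This establishes existence and uniqueness up to an additive constant. For the a~priori bound \eqref{grad-est}, I would take $\varphi=u$ in \eqref{def:solution} and apply the ellipticity on the left and Cauchy--Schwarz on the right:
\[
  c_1\lambda\,\|\nabla u\|_{L^2}^2
  \le \int_\Omega \sigma\,D\nabla u\cdot\nabla u\,\dd x
  = -\int_\Omega E\cdot\nabla u\,\dd x
  \le \|E\|_{L^2}\,\|\nabla u\|_{L^2},
\]
and dividing by $\|\nabla u\|_{L^2}$ yields the stated bound with constant $(c_1\lambda)^{-1}$.

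There is no genuine obstacle here: this is the textbook Neumann-problem argument. The only subtlety worth flagging is the quotient-space setup (equivalently, the mean-zero normalization) required to obtain coercivity and to make the uniqueness statement precise, together with the automatic verification of the Neumann compatibility condition built into the variational formulation. The claimed regularity $u\in H^1$ is what Lax--Milgram directly provides; improved regularity (beyond what is stated) would require invoking standard elliptic regularity for $C^{1,\beta}$ coefficients, but the proposition as stated does not demand it.
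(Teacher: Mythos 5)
Your proposal is correct and follows essentially the same route as the paper: the paper also invokes the Lax--Milgram theorem for existence and uniqueness up to a constant (citing a standard reference rather than spelling out the mean-zero-subspace and Poincar\'e--Wirtinger details as you do), and its derivation of the estimate \eqref{grad-est} --- testing with $\varphi=u$, using $c_1\lambda\|\nabla u\|_{L^2}^2 \leq \int_\Omega \sigma\, D\nabla u\cdot\nabla u\,\dd x$, and applying Cauchy--Schwarz --- is identical to yours.
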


\begin{proof}
The proof of the existence and uniqueness up to an additive constant is a standard result by the Lax-Milgram Theorem. We refer the readers to \cite{Taylor2011}. In the following, we prove the gradient estimate \eqref{grad-est}.

It follows from the ellipticity condition \eqref{ellipticity} that
  \[
    c_1\lambda \|\nabla u \|_{L^2}^2 \leq \int_{\Omega} \sigma \nabla u\cdot D\nabla u \, \dd x.
    \]
Taking the test function $\varphi$ in Definition~\ref{def:solution} to be the solution $u$, we have that
\[
\int_{\Omega} \sigma D\nabla u \cdot \nabla u \, \dd x =  - \int_{\Omega} {E} \cdot  \nabla u \, \dd x.
\]
Consequently, applying the Cauchy-Schwarz inequality, we obtain that
  \[
   c_1\lambda \|\nabla u \|_{L^2}^2 \leq \left| - \int_{\Omega} {E} \cdot  \nabla u \, \dd x \right| \leq \|\nabla u \|_{L^2} \|{E}\|_{L^2},
  \]
  and \eqref{grad-est} follows. 
\end{proof}

\begin{prop}
Let $\sigma$ and $D$ satisfy Assumption~\ref{asmp:sigma}. Then the system \eqref{eqn:MAT-MI} is uniquely solvable and there exists a constant $C$ and $C_i$ $(1\leq i\leq 3)$  depending on $\lambda$, $c_1$, $c_2$ and $\Omega$, such that
\beq
\label{EL2bd}
 \|{E}_{\sigma D} \|_{L^2} \leq C_1,
\eeq
 \beq
\label{inftyele}
 \|E_{\sigma D}\|_{L^{\infty}(\Omega)} \leq C_2,
\eeq
\beq
\label{C1}
 \|E_{\sigma D}\|_{C^{1,\gamma}(\Omega)} \leq C_3.
\eeq
Moreover, if $\sigma_1$ and $\sigma_2$ satisfy Assumption~\ref{asmp:sigma}, we have the following bound for the electric field difference,
\beq
\label{diffele}
 \|E_{\sigma_1D}-E_{\sigma_2D}\|_{L^2(\Omega)} \leq C \|\sigma_1-\sigma_2\|_{L^2(\Omega)}.
\eeq

\beq
\label{diffeleH1}
 \|E_{\sigma_1D}-E_{\sigma_2D}\|_{H^1(\Omega)} \leq C \|\sigma_1-\sigma_2\|_{H^1(\Omega)}.
\eeq
\end{prop}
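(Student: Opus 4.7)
My approach is to reduce the Maxwell system to the elliptic Neumann problem already treated in Definition~\ref{def:solution} and Proposition~\ref{prop:reg}. Since $B_1$ is a constant vector, the explicit affine field $E_0(x)=\tfrac12 B_1\times x$ satisfies $\nabla\times E_0=B_1$ and $\nabla\cdot E_0=0$. Writing $E_{\sigma D}=E_0+\nabla u$ fulfils the curl equation automatically and reduces the remaining conditions to
\begin{equation*}
\nabla\cdot(\sigma D\nabla u)=-\nabla\cdot(\sigma D E_0)\ \text{in }\Omega,\qquad (\sigma D\nabla u+\sigma D E_0)\cdot\nu=0\ \text{on }\partial\Omega,
\end{equation*}
which is precisely the problem of Definition~\ref{def:solution} with source $\tilde E:=\sigma D E_0$. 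Proposition~\ref{prop:reg} then gives existence and uniqueness of $u$ (up to additive constants), hence of $E_{\sigma D}$. Since $E_0$ is linear and $\Omega$ is bounded, $\|\tilde E\|_{L^2}\le c_2\|E_0\|_{L^2}$ is controlled, and \eqref{grad-est} yields \eqref{EL2bd}.

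For the pointwise bounds \eqref{inftyele} and \eqref{C1}, I would first rewrite $-\nabla\cdot(\sigma D E_0)=-\nabla(\sigma D)\cdot E_0$ using $\nabla\cdot E_0=0$, so that the right-hand side lies in $C^{0,\beta}$ by Assumption~\ref{asmp:sigma}. An $L^\infty$ bound on $u$ is obtained by De~Giorgi--Nash--Moser iteration for the divergence-form equation with bounded source and uniformly elliptic coefficient $\sigma D$. Bootstrapping then through classical Schauder estimates for the conormal Neumann problem (for which I will have to assume $\partial\Omega$ regular enough, say $C^{2,\beta}$) yields $u\in C^{2,\beta}$ with
\begin{equation*}
\|u\|_{C^{2,\beta}}\le C\bigl(\|\nabla(\sigma D)\cdot E_0\|_{C^{0,\beta}}+\|u\|_{L^\infty}\bigr).
\end{equation*}
Adding the smooth field $E_0$ gives $E_{\sigma D}\in C^{1,\beta}$, which implies \eqref{inftyele} and \eqref{C1} for any $\gamma\le\beta$.

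For the difference estimate \eqref{diffele} I would exploit that $E_1-E_2$ is curl-free (both curls equal $B_1$), so on the simply connected domain $\Omega$ there exists $v\in H^1$ with $E_1-E_2=\nabla v$. Subtracting the two systems and using $\nabla\cdot(\sigma_2 D E_2)=0$ gives
\begin{equation*}
\nabla\cdot(\sigma_1 D\nabla v)=-\nabla\cdot\bigl((\sigma_1-\sigma_2)D E_2\bigr),\qquad \bigl(\sigma_1 D\nabla v+(\sigma_1-\sigma_2)D E_2\bigr)\cdot\nu=0,
\end{equation*}
again a Neumann problem of the form treated in Definition~\ref{def:solution}, with source $\tilde E_\Delta=(\sigma_1-\sigma_2)D E_2$. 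Proposition~\ref{prop:reg} and the $L^\infty$ bound \eqref{inftyele} just established for $E_2$ give
\begin{equation*}
\|E_1-E_2\|_{L^2}=\|\nabla v\|_{L^2}\le c_1^{-1}\lambda^{-1}\|\tilde E_\Delta\|_{L^2}\le C\,\|E_2\|_{L^\infty}\,\|\sigma_1-\sigma_2\|_{L^2},
\end{equation*}
which is \eqref{diffele}.

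For \eqref{diffeleH1} I would upgrade via standard $H^2$ regularity for the same Neumann problem: $\|v\|_{H^2}\le C\|\tilde E_\Delta\|_{H^1}$. A product-rule expansion together with the previously proved bounds $D,\nabla D\in L^\infty$ and $E_2,\nabla E_2\in L^\infty$ (from \eqref{C1}) yields $\|\tilde E_\Delta\|_{H^1}\le C\|\sigma_1-\sigma_2\|_{H^1}$, hence $\|E_1-E_2\|_{H^1}=\|\nabla v\|_{H^1}\le C\|\sigma_1-\sigma_2\|_{H^1}$. The main technical obstacle in the whole argument is ensuring the up-to-the-boundary Schauder and $H^2$ estimates for the conormal Neumann problem, which tacitly requires $\partial\Omega$ smoother than the mere Lipschitz regularity stated; under that standing strengthening of the hypothesis the rest of the reasoning is a routine chain of elliptic estimates.
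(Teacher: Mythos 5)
Your proposal is correct and follows essentially the same route as the paper: the same decomposition $E_{\sigma D}=\tilde E+\nabla u$ with an explicit curl-potential for $B_1$ (the paper uses $\tfrac12(-y,x,0)$, you use the equivalent general form $\tfrac12 B_1\times x$), reduction to the Neumann problem of Definition~\ref{def:solution}, the energy estimate of Proposition~\ref{prop:reg}, an elliptic-regularity bootstrap for the $L^\infty$ and $C^{1,\gamma}$ bounds, and the curl-free difference $E_1-E_2=\nabla v$ for the Lipschitz estimates. The only deviations are cosmetic --- you invoke De Giorgi--Nash--Moser plus Schauder where the paper uses $W^{2,p}$ estimates with Sobolev embedding and Grisvard's Schauder theory --- and your explicit remark that the boundary must be smoother than Lipschitz is a point the paper glosses over rather than a gap in your argument.
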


\begin{proof}
Let us first reduce the system \eqref{eqn:MAT-MI} to a Neumann boundary value problem.
  Let $  {\tilde{E}} = \frac{1}{2}(-y,x,0)$.
   We can readily check that $\nabla \times {\tilde{E}} = {B}_1$. Hence $\nabla \times({E}_{\sigma D} - {\tilde{E}} ) = 0$ and we can write ${E}_{\sigma D}= {\tilde{E}} + \nabla u$. Substituting this into \eqref{eqn:MAT-MI}, we have that $u$ solves the
   Neumann boundary value problem
  \begin{equation}\label{eqn:E-decomp}
    \left\{
     \begin{array}{lll}
   \nabla \cdot (\sigma D\nabla u) & = - \nabla \cdot (\sigma D{\tilde{E}}), &\qquad \mbox{ in } \Omega,
  \\
    (\sigma D\nabla u + \sigma D{\tilde{E}})\cdot \nu & = 0, &\qquad \mbox{ on } \partial\Omega.
  \end{array}
    \right.
  \end{equation}
With the help of proposition \ref{prop:reg},  we get
 \[
 \|{E}_{\sigma D} \|_{L^2} \leq C_1.
 \]

 From the standard $L^p$ estimate for elliptic equations \cite[Chapter 9]{Gilbarg01} and the Sobolev Embedding Theorem, we know that ${E}_{\sigma D}$ is a bounded function in $W^{2,p}(\Omega)$ for any $p>2$. Hence,  ${E}_{\sigma D}$  is uniformly bounded by a constant
$C$, which depends only on $r_0$, $\lambda$, $c_1$, $c_2$, and $\Omega$. Then (\ref{inftyele}) is proved.

With the assumption of $C^{1,\gamma}$ property, we would obtain the $C^{2,\gamma}$ H\"older continuity\cite{Grisvard} of $u$, i.e., the $C^{1,\gamma}$ continuity of $E_{\sigma D}$. Estimate \eqref{C1} has been proven.

Next, we estimate the electric field difference. We denote $E_i=E_{\sigma_iD}$, for $i=1,2$. Note that ${E}_1 - {E}_2$ is curl-free. We set
  \[
  \nabla u = {E}_1 - {E}_2.
  \]
  Then, $u$ satisfies the equation
  \begin{equation}\label{eqn:diffele}
  \left\{
  \begin{array}{lll}
   \nabla \cdot (\sigma_1 D\nabla u) & =  -\nabla \cdot ((\sigma_1 - \sigma_2)D{E}_2), &\qquad \mbox{ in } \Omega,
  \\
     D\nabla u \cdot \nu & =  0 , &\qquad \mbox{ on } \partial\Omega.
  \end{array}
  \right.
\end{equation}
  With the same argument for proving (\ref{EL2bd}), we obtain that
  \[
  \|\nabla u\|_{L^2} \leq c_1^{-1}\lambda^{-1} \|(\sigma_1 -\sigma_2) D{E}_2\|_{L^2}.
  \]
Thus, we conclude from (\ref{inftyele}) that
  \[
  \|{E}_1 - {E}_2\|_{L^2} \leq C \|(\sigma_1 -\sigma_2)\|_{L^2}.
  \]
From the standard theory of elliptic equations
\beq
\|u\|_{H^2(\Omega)}\leq C \|\nabla \cdot ((\sigma_1 - \sigma_2)D{E}_2)\|_{L^2(\Omega)},
\eeq
which implies \eqref{diffeleH1}.
\end{proof}

\section{Uniqueness and stability}
With the notation of the previous section, we will show the well-posedness of the inverse problem in a certain functional space.

We prove the following theorem on the stability of the inverse problem.
\begin{theorem}
\label{stability}
Let $F(\sigma) =  \nabla\cdot (\sigma D E_{\sigma D}\times B_0)$. Suppose Assumption \ref{asmp:sigma} is satisfied and $\sigma_1-\sigma_2\in W^{1,\infty}_0$. If there exist constants $K$, $L$ and $\eta$ such that
\beq
\label{gragsigma}
\|\nabla \sigma_i\|_{L^{\infty}} \leq K
\eeq
\beq
\label{asu-lambda}
|1-\lambda| \leq \eta
\eeq
\beq
\label{asu-sigma12}
\|\nabla(\sigma_1-\sigma_2)\|_{L^2(\Omega)}\leq L \|\sigma_1-\sigma_2\|_{L^2(\Omega)},
\eeq
then
\beq
c\|\sigma_1-\sigma_2\|_{L^2(\Omega)} \leq  \|F(\sigma_1)-F(\sigma_2)\|_{L^2(\Omega)}
\eeq
holds for some positive constant $c$.
\end{theorem}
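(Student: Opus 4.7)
The plan is to write $F(\sigma_1) - F(\sigma_2)$ as an explicit pointwise expression whose leading term is the scalar $\sigma_1 - \sigma_2$ itself, and whose remainder is controlled by the hypotheses \eqref{gragsigma}--\eqref{asu-sigma12} together with the regularity estimates already established in the previous proposition.

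Setting $\delta\sigma := \sigma_1 - \sigma_2$, $E_i := E_{\sigma_i D}$, and $\delta E := E_1 - E_2$, I would first use the decomposition
\[
\sigma_1 D E_1 - \sigma_2 D E_2 \;=\; \delta\sigma \, D E_1 + \sigma_2 D \, \delta E
\]
together with the identity $\nabla \cdot (V \times B_0) = B_0 \cdot (\nabla \times V)$, valid since $B_0$ is constant, to obtain
\[
F(\sigma_1) - F(\sigma_2) \;=\; B_0 \cdot \nabla \times (\delta\sigma\, D E_1) + B_0 \cdot \nabla \times (\sigma_2 D\, \delta E).
\]
Next, I would apply the Leibniz rule $\nabla \times (f A) = \nabla f \times A + f\,\nabla \times A$ to each summand and split $D = I + (D - I)$. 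Since, by construction of $E_i$ in the preceding proposition, $\nabla \times E_i = B_1 = (0,0,1) = B_0$ and $\nabla \times \delta E = 0$, the curls of $DE_1$ and $D\,\delta E$ reduce to $B_0 + \nabla \times ((D-I)E_1)$ and $\nabla \times ((D-I)\delta E)$ respectively. Collecting everything yields the pointwise identity
\[
F(\sigma_1) - F(\sigma_2) \;=\; |B_0|^2 \, \delta\sigma + \mathcal{R}(\sigma_1, \sigma_2),
\]
where the remainder $\mathcal{R}$ is a sum of four explicit terms, each linear in one of $\nabla \delta\sigma$, $\nabla \sigma_2$, $D - I$, or $\delta E$.

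The concluding step is to bound $\|\mathcal{R}\|_{L^2}$ by $\kappa \|\delta\sigma\|_{L^2}$ for a constant $\kappa$ that can be made strictly smaller than $1$ in the regime of the hypotheses. The term involving $\nabla \delta\sigma \times DE_1$ is bounded by $\|DE_1\|_{L^\infty}\|\nabla\delta\sigma\|_{L^2}$, hence by $C_2 L \|\delta\sigma\|_{L^2}$ through \eqref{inftyele} and \eqref{asu-sigma12}. The term $B_0 \cdot (\nabla\sigma_2 \times D\delta E)$ is bounded by $\|\nabla\sigma_2\|_{L^\infty}\|\delta E\|_{L^2}$, controlled by \eqref{gragsigma} and \eqref{diffele}, giving $CK\|\delta\sigma\|_{L^2}$. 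The terms carrying $D - I$ exploit $\|D - I\|_{L^\infty}$ small (governed by $\eta$ via \eqref{asu-lambda}) together with the $C^{1,\gamma}$-bound on $E_1$ from \eqref{C1} and the $H^1$-bound on $\delta E$ from \eqref{diffeleH1}, again combined with \eqref{asu-sigma12}. Triangle inequality then gives $\|F(\sigma_1) - F(\sigma_2)\|_{L^2} \geq (1 - \kappa)\|\delta\sigma\|_{L^2}$, which is the claimed Lipschitz stability.

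The main technical hurdle is the remainder term $\sigma_2 B_0 \cdot \nabla \times ((D-I)\delta E)$, since it requires control of $\nabla \delta E$ and not merely of $\delta E$. This is exactly why the $H^1$-estimate \eqref{diffeleH1} must be invoked, and why the hypothesis \eqref{asu-sigma12} is essential: without a bound on $\|\delta\sigma\|_{H^1}$ in terms of $\|\delta\sigma\|_{L^2}$, the $H^1$-estimate for $\delta E$ could not be converted back into a bound by $\|\delta\sigma\|_{L^2}$, and the argument would break down. The vanishing boundary condition $\delta\sigma \in W^{1,\infty}_0$ ensures compatibility of the Neumann problems defining $E_1$ and $E_2$ and allows any integration by parts to proceed without boundary contributions.
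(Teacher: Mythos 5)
Your pointwise decomposition is where the argument breaks down. The hypotheses give smallness only of $\|D-I\|_{L^\infty}$: from the ellipticity bound \eqref{ellipticity} the eigenvalues of $D$ lie in $[\lambda,1]$, so \eqref{asu-lambda} yields $\|D-I\|_{L^\infty}\leq 1-\lambda\leq\eta$, but \emph{nothing} in Assumption \ref{asmp:sigma} or in \eqref{gragsigma}--\eqref{asu-sigma12} makes $\nabla D$ small; it is merely finite. Your remainder terms $\delta\sigma\,B_0\cdot\nabla\times\bigl((D-I)E_1\bigr)$ and $\sigma_2\,B_0\cdot\nabla\times\bigl((D-I)\,\delta E\bigr)$ contain, after the Leibniz rule, the contributions $(\nabla(D-I))E_1$ and $(\nabla(D-I))\,\delta E$, whose $L^2$-norms are of order $\|\nabla D\|_{L^\infty}C_2\|\delta\sigma\|_{L^2}$ and $\|\nabla D\|_{L^\infty}C\|\delta\sigma\|_{L^2}$ respectively. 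Since $\|\nabla D\|_{L^\infty}$ can be arbitrarily large under the stated hypotheses, your constant $\kappa$ cannot be forced below $1$, and the concluding triangle inequality fails. A secondary weakness of the same kind: your bound $C_2L\|\delta\sigma\|_{L^2}$ on the term $B_0\cdot(\nabla\delta\sigma\times DE_1)$ requires $L$ itself to be small, whereas the theorem is meant to hold with $L$ large and only $K$, $\eta$ (more precisely $K$ and $\eta L$) small.

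The paper escapes both problems by never forming a pointwise identity: it multiplies $F(\sigma_1)-F(\sigma_2)$ by $\delta\sigma=\sigma_1-\sigma_2$, integrates over $\Omega$, and integrates by parts, the boundary terms vanishing because $\delta\sigma\in W^{1,\infty}_0$. This accomplishes exactly what your argument cannot. First, every divergence acting on $(D-I)E_1$ or $(D-I)\delta E$ is thrown back onto $\delta\sigma$ or $\sigma_2$, e.g. $\int_\Omega\delta\sigma\,\nabla\cdot\bigl(\delta\sigma(D-I)E_1\times B_0\bigr)\,\dd x=-\int_\Omega\delta\sigma\,\nabla\delta\sigma\cdot\bigl((D-I)E_1\times B_0\bigr)\,\dd x$, so $D$ and $E$ are never differentiated and the terms are bounded by $\eta LC\|\delta\sigma\|_{L^2}^2$ and $KC\|\delta\sigma\|_{L^2}^2$ using only \eqref{inftyele}, \eqref{diffele}, \eqref{gragsigma}--\eqref{asu-sigma12}. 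Second, the term $\int_\Omega\delta\sigma\,\nabla\delta\sigma\cdot(E_1\times B_0)\,\dd x$ is not estimated at all but computed exactly, equalling $-\frac{1}{2}\int_\Omega\delta\sigma^2\,\nabla\cdot(E_1\times B_0)\,\dd x$, and merges with the main term (using $\nabla\cdot(E_1\times B_0)=1$) into $\frac{1}{2}\|\delta\sigma\|_{L^2}^2$; this is why no smallness of $L$ alone is needed. Choosing $KC+2\eta LC<\frac{1}{2}$ gives $\int_\Omega\delta\sigma\,(F(\sigma_1)-F(\sigma_2))\,\dd x\geq c\|\delta\sigma\|_{L^2}^2$, and Cauchy--Schwarz finishes the proof. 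Note two diagnostics in your write-up: the hypothesis $\delta\sigma\in W^{1,\infty}_0$ is never actually used in your argument (it is irrelevant to the Neumann problems and you perform no integration by parts), which is a sign of the missing idea, since it exists precisely to make the paper's integrations by parts boundary-free; and the $H^1$ estimate \eqref{diffeleH1}, which your scheme is forced to invoke, is not needed in the paper's proof of this theorem at all --- only \eqref{diffele} is.
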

\begin{proof}
We denote $E_i=E_{\sigma_iD}$, $i=1,2$ and write the data difference as
\ben
F(\sigma_1)-F(\sigma_2) &=& \nabla\cdot (\sigma_1DE_1\times B_0) - \nabla\cdot (\sigma_2DE_2\times B_0) \nn\\
 &=& \nabla\cdot ((\sigma_1-\sigma_2)DE_1\times B_0)+\nabla\cdot (\sigma_2D(E_1-E_2)\times B_0).\nn
\een
 Then, we can rewrite $F(\sigma_1)-F(\sigma_2)$ as
\ben
F(\sigma_1)-F(\sigma_2) &=& I_1 +I_2 + I_3 +I_4,\nn
\een
where
\ben
I_1 &=& \nabla\cdot ((\sigma_1-\sigma_2)E_1\times B_0),\nn\\
I_2 &=& \nabla\cdot (\sigma_2(E_1-E_2)\times B_0),\nn\\
I_3 &=& \nabla\cdot ((\sigma_1-\sigma_2)(D-I)E_1\times B_0),\nn\\
I_4 &=& \nabla\cdot (\sigma_2(D-I)(E_1-E_2)\times B_0), \nn
\een
where $I$ is the identity matrix.

Next we multiply $F(\sigma_1)-F(\sigma_2)$ by $\sigma_1-\sigma_2$ and integrate over $\Omega$.
For $I_i$,  $i=1,2,3,4$, we  can estimate the integrals $\int_\Omega (\sigma_1 -\sigma_2) I_i$ separately. We have
\ben
\int_{\Omega}(\sigma_1-\sigma_2)I_1dx &=& \int_{\Omega} \bigg( (\sigma_1-\sigma_2) (\sigma_1-\sigma_2) \nabla\cdot (E_1\times B_0) \nn\\
& & + (\sigma_1-\sigma_2) \nabla(\sigma_1-\sigma_2)\cdot (E_1\times B_0) \bigg) dx \nn\\
& = & \frac{1}{2} \int_{\Omega}(\sigma_1-\sigma_2) (\sigma_1-\sigma_2) \nabla\cdot (E_1\times B_0) dx \nn\\
&=& \frac{1}{2}\|\sigma_1-\sigma_2\|^2_{L^2(\Omega)}.
\een
Here we use the equality $\nabla\cdot (E_1\times B_0)=1$ which can be easily checked from the identity $\nabla\cdot (E_1\times B_0)=B_0\cdot (\nabla \times E_1)- E_1\cdot(\nabla\times B_0)=1$. On the other hand,
\ben
\left|\int_{\Omega}(\sigma_1-\sigma_2)I_2dx\right| &=& \left|\int_{\Omega}(\sigma_1-\sigma_2) \nabla\sigma_2\cdot ((E_1-E_2)\times B_0) )dx\right| \nn\\
&\leq& KC\|\sigma_1-\sigma_2\|^2_{L^2(\Omega)}.
\een
Here the assumption (\ref{gragsigma}) and inequality (\ref{diffele}) have been used.

Now we turn to the terms $I_3$ and $I_4$. We have
\ben
\left|\int_{\Omega}(\sigma_1-\sigma_2)I_3dx\right| &=& \bigg| \int_{\Omega}(\sigma_1-\sigma_2) (\sigma_1-\sigma_2) \nabla\cdot ((D-I)E_1\times B_0) \nn\\
& & + (\sigma_1-\sigma_2) \nabla(\sigma_1-\sigma_2)\cdot ((D-I)E_1\times B_0) )dx \bigg| \nn\\
&=& \left|-\int_{\Omega}(\sigma_1-\sigma_2) \nabla(\sigma_1-\sigma_2) \cdot ((D-I)E_1\times B_0)dx\right|\nn\\
&\leq & \eta LC \|\sigma_1-\sigma_2\|^2_{L^2(\Omega)}.
\een
In the last inequality we have used estimate (\ref{inftyele}) together with the assumptions (\ref{asu-lambda}) and (\ref{asu-sigma12}). Finally, we have
\ben
\left|\int_{\Omega}(\sigma_1-\sigma_2)I_4dx\right| &=& \bigg|\int_{\Omega}(\sigma_1-\sigma_2) \sigma_2\nabla\cdot ((D-I)(E_1-E_2)\times B_0) \nn\\
& & + (\sigma_1-\sigma_2) \nabla\sigma_2\cdot ((D-I)(E_1-E_2)\times B_0) )dx \bigg| \nn\\
&=& \left|-\int_{\Omega}\sigma_2\nabla(\sigma_1-\sigma_2) \cdot ((D-I)(E_1-E_2)\times B_0)dx\right|\nn\\
&\leq & \eta LC \|\sigma_1-\sigma_2\|^2_{L^2(\Omega)}.
\een

Here we have used the assumptions (\ref{asu-lambda}), (\ref{asu-sigma12}) and inequality (\ref{diffele}).


\bigskip

Let $K$ and $\eta$ be such that $KC+2\eta LC <\frac{1}{2}$. We obtain
\ben
\int_{\Omega}(\sigma_1-\sigma_2)(F(\sigma_1)-F(\sigma_2)) \geq c \|\sigma_1-\sigma_2\|^2_{L^2(\Omega)},\nn
\een
for some constant $c$, which proves the theorem.
\end{proof}

Now we are ready to introduce a functional framework for which the inverse problem is well defined.
 We assume that $\sigma$ is known on the boundary of $\Omega$.  In what follows, we let $\sigma_*$, the true cross-property factor of $\Omega$, belong to a bounded convex subset of $C^{1,\beta}(\Omega)$ given by
$$ \widetilde{\mathcal{S}} = \{\sigma := \sigma_0 + \alpha | \,\alpha \in \mathcal{S}\},$$
where $\sigma_0$ is some positive function satisfying Assumption \ref{asmp:sigma} and
\begin{equation}\label{eq:mathcalS}
\begin{array}{ll}
\mathcal{S} =& \{\alpha\in  C_0^{1,\beta}(\Omega)|~  c_1\leq \alpha + \sigma_0 \leq c_2,~ |\nabla(\alpha + \sigma_0)|_{L^{\infty}}\leq K, ~ \\
&\quad\quad\quad\quad\quad\quad\quad\quad\| \nabla\alpha\|_{L^2(\Omega)} \le L\|\alpha\|_{L^2(\Omega)}, ~\|\alpha\|_{L^2(\Omega)}\le c_3 \}
\end{array}
\end{equation}
with $c_1, c_2, c_3$ and $c_3$ being positive constants. In other words, we can write
$
\widetilde{\mathcal{S}} = \sigma_0 + \mathcal{S}.$

It is clear that the distribution of the electric field ${E}_{\sigma D}$
depends nonlinearly on the factor $\sigma$ and $\nabla \cdot
(\sigma D{E}_{\sigma D}\times {B}_0)$ is nonlinear with respect
to $\sigma$.  We
first examine the Fr\'echet differentiability of the forward operator
$F$. Then, some useful properties of the Fr\'echet derivative at
$\sigma$, $DF[\sigma]$, are presented.

To introduce the Fr\'echet derivative, we consider the following Neumann boundary value problem
\begin{equation}\label{eqn:DF-2}
\left\{
  \begin{array}{lll}
   \nabla \cdot (\sigma D\nabla \varphi_h) & = - \nabla \cdot (h D{E}_{\sigma D}), &\qquad \mbox{ in } \Omega,
  \\
     (\sigma D\nabla \varphi_h + hD{E}_{\sigma D}) \cdot \nu & =  0 , &\qquad \mbox{ on } \partial\Omega,
  \end{array}
  \right.
\end{equation}
and
\begin{equation}\label{eqn:DFad-2}
\left\{
  \begin{array}{lll}
   \nabla \cdot (\sigma D\nabla \Phi_g) & = - \nabla \cdot (\sigma D (B_0\times\nabla g)), &\qquad \mbox{ in } \Omega,
  \\
     (\sigma D\nabla \Phi_g + \sigma D(B_0\times \nabla g)) \cdot \nu & =  0 , &\qquad \mbox{ on } \partial\Omega,
  \end{array}
  \right.
\end{equation}
where $h\in \mathcal{S}$ is the increment to the factor $\sigma$.

By the same arguments as those in \cite{Qiu2015}, together with Theorem \ref{stability}, it is natural to conclude the following result that insures the well-posedness of the inverse problem.
\begin{theorem}\label{thm:Fre-diff}
For $\sigma$ and $D$ satisfying Assumption~\ref{asmp:sigma} and (\ref{asu-lambda}), the operator $F$ is bounded and Fr\'echet differentiable at $\sigma \in \widetilde{\mathcal{S}}$. Its Fr\'echet derivative at $\sigma$, $DF[\sigma]$, is given by
  \begin{equation}\label{Fre-der}
    DF[\sigma](h) = \nabla \cdot ((\sigma D\nabla \varphi_h + h D{E}_{\sigma D}) \times {B}_0),
  \end{equation}
  where $\varphi_h$ solves \eqref{eqn:DF-2}.
Meanwhile, $DF[\sigma]^*$, i.e., the adjoint of $DF[\sigma]$ is defined as below,
\begin{equation}\label{Fread-der}
    DF[\sigma]^*(g) = -DE_{\sigma D}\cdot \nabla \Phi_g -\nabla g\cdot (DE_{\sigma D}\times B_0),
  \end{equation}
  where $\Phi_g$ solves (\ref{eqn:DFad-2}).
   Furthermore, we have the following stability result,
  \begin{equation}\label{DF-bd}
    c \|h\|_{L^2(\Omega)}\leq\|DF[\sigma](h)\|_{L^2} \leq C \|h\|_{L^2(\Omega)}, \quad \forall h\in \mathcal{S},
  \end{equation}
  for some constant $C$ which depends on $\lambda, c_1, c_2$ and $\Omega$ and the constant $c$ is defined in Theorem \ref{stability}.
\end{theorem}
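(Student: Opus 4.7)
The plan is to mimic the three-part structure of the statement---Fréchet differentiability, the explicit form of the adjoint, and the two-sided stability bound---by leveraging the forward estimates of the previous section and the coercivity argument of Theorem~\ref{stability}. For differentiability at $\sigma \in \widetilde{\mathcal{S}}$, I would pick $h \in \mathcal{S}$ small enough that $\sigma + h$ still lies in $\widetilde{\mathcal{S}}$, set $E = E_{\sigma D}$ and $E_h = E_{(\sigma+h)D}$, and note (as in the derivation of (\ref{diffeleH1})) that $E_h - E$ is curl-free, so $E_h - E = \nabla \psi_h$ with $\psi_h$ solving the Neumann problem
\begin{equation*}
\nabla\cdot\bigl((\sigma+h)D\nabla \psi_h\bigr) = -\nabla\cdot(h D E), \qquad \bigl((\sigma+h)D\nabla \psi_h + h D E\bigr)\cdot\nu = 0.
\end{equation*}
Subtracting the equation for $\varphi_h$ in (\ref{eqn:DF-2}) and applying Proposition~\ref{prop:reg} gives $\|\nabla(\psi_h - \varphi_h)\|_{L^2} = O(\|h\|_{L^2}^2)$. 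Substituting $E_h = E + \nabla\psi_h$ into $F(\sigma+h) - F(\sigma)$ and isolating the terms linear in $h$ would then identify $DF[\sigma](h)$ as in (\ref{Fre-der}) and bound the remainder by $o(\|h\|_{L^2})$ in $L^2(\Omega)$.

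For the adjoint formula (\ref{Fread-der}), I would pair $DF[\sigma](h)$ with a test function $g \in H^1$ and integrate by parts using $\nabla\cdot(v\times B_0) = B_0 \cdot (\nabla\times v)$ (since $B_0$ is constant), reducing the pairing to
\begin{equation*}
\int_\Omega \bigl(\sigma D\nabla\varphi_h + h D E\bigr)\cdot (B_0\times \nabla g)\,\dd x.
\end{equation*}
To eliminate the $\varphi_h$ contribution, I would substitute $\varphi_h$ as a test function in the weak form of (\ref{eqn:DFad-2}) and $\Phi_g$ as a test function in the weak form of (\ref{eqn:DF-2}); equating the two resulting identities yields $\int \sigma D\nabla\varphi_h \cdot(B_0\times\nabla g)\,\dd x = -\int h D E\cdot \nabla\Phi_g\,\dd x$. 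A final rearrangement recovers (\ref{Fread-der}).

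The stability estimate (\ref{DF-bd}) is where I expect the main work. The upper bound follows routinely from Proposition~\ref{prop:reg} applied to (\ref{eqn:DF-2}) (with (\ref{inftyele}) used to bound $\|hDE\|_{L^2} \leq C\|h\|_{L^2}$) combined with H\"older's inequality to absorb the divergence. The lower bound is the linearized counterpart of Theorem~\ref{stability} and is the main obstacle: I would test $DF[\sigma](h)$ against $h$, split $\int_\Omega h\, DF[\sigma](h)\,\dd x$ into four pieces analogous to $I_1,\dots,I_4$ (with $\sigma_1-\sigma_2$ replaced by $h$ and $E_1 - E_2$ replaced by $\nabla\varphi_h$), and invoke the identity $\nabla\cdot(E\times B_0) = 1$ to extract the dominant term $\tfrac{1}{2}\|h\|_{L^2}^2$ from the analogue of $I_1$. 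The remaining three pieces are absorbed by the pointwise and $H^1$-versus-$L^2$ bounds built into $\mathcal{S}$ through (\ref{eq:mathcalS}), together with the smallness conditions (\ref{gragsigma})--(\ref{asu-sigma12}) and the ellipticity gap (\ref{asu-lambda}), under the same constraint $KC + 2\eta LC < \tfrac{1}{2}$ used in Theorem~\ref{stability}. Cauchy--Schwarz on the resulting coercivity inequality then delivers $c\|h\|_{L^2} \leq \|DF[\sigma](h)\|_{L^2}$.
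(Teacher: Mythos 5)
Your plan follows the paper's proof almost exactly: the paper proves only the adjoint formula \eqref{Fread-der} (by the very cross-testing trick you propose, recorded there as identity \eqref{int-part}), and for the differentiability claim and the two-sided bound \eqref{DF-bd} it simply cites \cite{Qiu2015} and Theorem~\ref{stability}; your sketches of those parts (comparing $\psi_h$ with $\varphi_h$, and rerunning the $I_1,\dots,I_4$ coercivity argument with $h$ in place of $\sigma_1-\sigma_2$ and $\nabla\varphi_h$ in place of $E_1-E_2$) are exactly the arguments being invoked. The one genuine problem is that both displayed identities in your adjoint computation carry sign errors, and they do not cancel. Writing $E=E_{\sigma D}$, integration by parts gives
\begin{equation*}
\int_\Omega DF[\sigma](h)\, g\,\dd x \;=\; -\int_\Omega \bigl(\sigma D\nabla\varphi_h + h D E\bigr)\cdot(B_0\times\nabla g)\,\dd x,
\end{equation*}
with a minus sign you dropped; and cross-testing the weak forms of \eqref{eqn:DF-2} and \eqref{eqn:DFad-2} (using the symmetry of $D$) gives $\int_\Omega \sigma D\nabla\varphi_h\cdot(B_0\times\nabla g)\,\dd x = +\int_\Omega h D E\cdot\nabla\Phi_g\,\dd x$, the opposite sign to what you wrote; this is precisely the paper's \eqref{int-part}. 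If you carry your two versions through, the errors cancel in the $\Phi_g$ term but not in the other one, and you land on $DF[\sigma]^*(g) = -DE\cdot\nabla\Phi_g + \nabla g\cdot(DE\times B_0)$, which contradicts \eqref{Fread-der}; no rearrangement recovers the stated formula. With both signs corrected, your computation becomes identical to the paper's proof.

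Two smaller points on the parts the paper outsources. First, your remainder bound $\|\nabla(\psi_h-\varphi_h)\|_{L^2}=O(\|h\|_{L^2}^2)$ is stronger than what Proposition~\ref{prop:reg} yields: the difference solves a problem with source $-\nabla\cdot(hD\nabla\psi_h)$, so you get $\|\nabla(\psi_h-\varphi_h)\|_{L^2}\lesssim \|h\|_{L^\infty}\|h\|_{L^2}$, and converting $\|h\|_{L^\infty}$ into a positive power of $\|h\|_{L^2}$ requires the uniform $W^{1,\infty}$ bound built into $\mathcal{S}$ (via interpolation); that is what makes the remainder $o(\|h\|_{L^2})$. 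Second, the upper bound in \eqref{DF-bd} does not follow from Proposition~\ref{prop:reg} plus H\"older alone: $DF[\sigma](h)$ is a divergence, so an $L^2$ bound on $\nabla\varphi_h$ is insufficient; you need an $H^2$ estimate for $\varphi_h$ (as in the derivation of \eqref{diffeleH1}) together with the constraint $\|\nabla h\|_{L^2}\le L\|h\|_{L^2}$ from \eqref{eq:mathcalS} to control the derivatives of $hDE$ and $\sigma D \nabla \varphi_h$ by $\|h\|_{L^2}$.
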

\begin{proof}
The definition of $DF[\sigma](h)$ and (\ref{DF-bd}) follow from \cite{Qiu2015} and Theorem \ref{stability}. Here we only give a brief proof of the formulation of $DF[\sigma]^*$.

First by multiplying (\ref{eqn:DF-2}) by $\Phi_g$ and  (\ref{eqn:DFad-2}) by $\varphi_h$, we get after an integration by parts,
\beq\label{int-part}
\int_{\Omega}\sigma D(B_0\times \nabla g)\cdot \nabla \varphi_h dx = -\int_{\Omega} \sigma D\nabla \varphi_h\cdot \nabla \Phi_g dx = \int_{\Omega} hDE_{\sigma D}\cdot \nabla \Phi_g dx.
\eeq
Then we are ready to compute $DF[\sigma]^*(g)$. We have
\ben
\int_{\Omega} DF[\sigma](h) g dx &=& \int_{\Omega} \nabla \cdot ((\sigma D\nabla \varphi_h + h D{E}_{\sigma D}) \times {B}_0) g dx \nn \\
&=& -\int_{\Omega} ((\sigma D\nabla \varphi_h + h D{E}_{\sigma D}) \times {B}_0) \cdot\nabla g dx \nn \\
&=& -\int_{\Omega}  -\sigma D\nabla \varphi_h\cdot \nabla \Phi_g + h (D{E}_{\sigma D} \times {B}_0) \cdot\nabla g dx \nn \\
&=& -\int_{\Omega}  hDE_{\sigma D}\cdot \nabla \Phi_g + h (D{E}_{\sigma D} \times {B}_0) \cdot\nabla g dx \nn .
\een
This proves (\ref{Fread-der}).
\end{proof}

\section{The reconstruction method}
\subsection{Optimization scheme}
It is natural to formulate the reconstruction problem for $\sigma_*$ as a least-square problem. To find  $\sigma_*$ we minimize the functional
\begin{equation*}
    J(\sigma) = \frac{1}{2}\|F(\sigma) - F(\sigma_*)\|^2_{L^2 (\Omega)}
\end{equation*}
over $\sigma \in \widetilde{\mathcal{S}}$.

We can now apply the gradient descent method to minimize the discrepancy functional $J$. Define the iterates
\begin{equation} \label{defeta}
\sigma_{n+1}= T[\sigma_n] - \mu D
J[T[\sigma_n]],
\end{equation}
where $\mu >0$ is the step size and
$T[f]$ is any approximation of the Hilbert projection from $L^2(\Omega)$ onto $\overline{\widetilde{\mathcal{S}}}$ with $\overline{\widetilde{\mathcal{S}}}$ being the closure of $\widetilde{\mathcal{S}}$ (in the $L^2$-norm).

The presence of the projection $T$ is necessary because $
\sigma_n$ might not be in $\widetilde{\mathcal{S}}$.

Using the definition of $J$ we can show that the optimal control algorithm (\ref{defeta}) is nothing else than the following projected Landweber iteration \cite{landweber, Hankeetal:nm1995, Hoop2015} given by
\beq\label{minimizing sequence}
\begin{array}{rcl}
\sigma_{n+1}  = T[\sigma_n] - \mu \D
F^*[T[\sigma_n]] (F(T[\sigma_n])-F(\sigma_*)) .
\end{array}
\eeq

For completeness, we state the convergence result of Landweber scheme here without proof. We refer to \cite{Ammari14, Hoop2015} for  details.

\begin{theorem}
	The sequence defined in \eqref{minimizing sequence} converges to the true cross-property factor $\sigma_*$ of $\Omega$ in the following sense: there exists $\epsilon>0$ such that if
	$\| T[\sigma_1] -  \sigma_*\|_{L^2(\Omega)} < \epsilon$, then
	$$ \lim_{n \rightarrow + \infty} \| \sigma_n - \sigma_*\|_{L^2(\Omega)} =0.$$
\end{theorem}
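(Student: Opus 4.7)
The plan is to establish local contractivity of the projected Landweber map and then propagate it by induction. Write $e_n = T[\sigma_n] - \sigma_*$. Since $\sigma_* \in \widetilde{\mathcal{S}} \subset \overline{\widetilde{\mathcal{S}}}$ and $T$ is non-expansive on $L^2$, we have $\|T[\sigma_{n+1}] - \sigma_*\|_{L^2} = \|T[\sigma_{n+1}] - T[\sigma_*]\|_{L^2} \le \|\sigma_{n+1} - \sigma_*\|_{L^2}$, so it suffices to control the unprojected error $\|\sigma_{n+1} - \sigma_*\|_{L^2}$ in terms of $\|e_n\|_{L^2}$.

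First I would Taylor-expand the forward map around $\sigma_*$. By Theorem~\ref{thm:Fre-diff}, combined with the estimates \eqref{diffele}--\eqref{diffeleH1} applied to the correction equation satisfied by $E_{T[\sigma_n]D} - E_{\sigma_*D} - \varphi_{e_n}$, one obtains
\[
F(T[\sigma_n]) - F(\sigma_*) = DF[\sigma_*]\,e_n + R_n, \qquad \|R_n\|_{L^2} \le C_R \|e_n\|_{L^2}^2,
\]
together with an analogous Lipschitz bound $\|DF^*[T[\sigma_n]] - DF^*[\sigma_*]\|_{\mathrm{op}} \le C_L \|e_n\|_{L^2}$, obtained by differentiating \eqref{eqn:DFad-2} in $\sigma$ and using the uniform $C^{1,\gamma}$ bound \eqref{C1} on the electric field.

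Substituting into the iteration gives
\[
\sigma_{n+1} - \sigma_* = \bigl(I - \mu\, DF^*[\sigma_*]\,DF[\sigma_*]\bigr) e_n \;-\; \mu\bigl(DF^*[T[\sigma_n]] - DF^*[\sigma_*]\bigr) DF[\sigma_*] e_n \;-\; \mu\, DF^*[T[\sigma_n]]\, R_n .
\]
The two-sided bound \eqref{DF-bd} yields $\langle DF^*[\sigma_*]DF[\sigma_*]e_n,e_n\rangle = \|DF[\sigma_*]e_n\|_{L^2}^2 \ge c^2 \|e_n\|_{L^2}^2$ and $\|DF^*[\sigma_*]DF[\sigma_*]e_n\|_{L^2} \le C^2\|e_n\|_{L^2}$, whence
\[
\bigl\|\bigl(I - \mu\, DF^*[\sigma_*]\,DF[\sigma_*]\bigr)e_n\bigr\|_{L^2}^2 \le \bigl(1 - 2\mu c^2 + \mu^2 C^4\bigr)\|e_n\|_{L^2}^2 .
\]
Choosing $\mu \in (0, 2c^2/C^4)$ makes $q_0 := \sqrt{1 - 2\mu c^2 + \mu^2 C^4} < 1$. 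The remaining two terms on the right of the decomposition are $O(\|e_n\|_{L^2}^2)$ in $L^2$, so they are dominated by a small multiple of $\|e_n\|_{L^2}$ once $\|e_n\|_{L^2}$ is small. Hence, for $\epsilon$ small enough, one obtains $\|\sigma_{n+1} - \sigma_*\|_{L^2} \le q\|e_n\|_{L^2}$ with some $q \in (q_0, 1)$.

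The induction is then immediate: if $\|e_1\|_{L^2} < \epsilon$, then $\|e_2\|_{L^2} \le \|\sigma_2 - \sigma_*\|_{L^2} \le q\|e_1\|_{L^2} < \epsilon$, and iterating gives $\|e_n\|_{L^2} \le q^{n-1}\|e_1\|_{L^2} \to 0$; finally, a short argument using that $T[\sigma_n]$ is the $L^2$-closest point to $\sigma_n$ in $\overline{\widetilde{\mathcal{S}}}$ and that $\sigma_*$ lies in this set shows $\|\sigma_n - \sigma_*\|_{L^2} \to 0$ as well. The main obstacle I expect is the quantitative Lipschitz continuity $\sigma \mapsto DF[\sigma]$, since although Theorem~\ref{thm:Fre-diff} gives existence and boundedness of $DF[\sigma]$, turning \emph{Fr\'echet} continuity into a usable $O(\|e_n\|)$ operator-norm bound requires carefully revisiting the adjoint Neumann problem \eqref{eqn:DFad-2} and invoking the higher regularity \eqref{inftyele}--\eqref{C1} uniformly over $\widetilde{\mathcal{S}}$.
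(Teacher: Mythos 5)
First, a point of reference: the paper does not actually prove this theorem --- it states the result ``without proof'' and refers to \cite{Ammari14, Hoop2015} --- so your proposal has to be judged against the standard local-contraction analysis of projected Landweber iterations in those references. Your skeleton is exactly that argument: non-expansiveness of $T$, linearization at $\sigma_*$, the spectral estimate $\|(I-\mu\, DF^*[\sigma_*]DF[\sigma_*])e_n\|_{L^2}^2 \le (1-2\mu c^2+\mu^2C^4)\|e_n\|_{L^2}^2$ with $\mu\in(0, 2c^2/C^4)$, absorption of the higher-order terms, and induction. The algebraic decomposition of $\sigma_{n+1}-\sigma_*$ into the three terms is correct, and the final induction step is fine (indeed your own contraction estimate already gives $\|\sigma_n-\sigma_*\|_{L^2}\to 0$ without any extra argument about closest points).

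The genuine gap is the step $\langle DF^*[\sigma_*]DF[\sigma_*]e_n,e_n\rangle = \|DF[\sigma_*]e_n\|_{L^2}^2 \ge c^2\|e_n\|_{L^2}^2$ (and, for the same reason, the companion bound $\|DF^*[\sigma_*]DF[\sigma_*]e_n\|_{L^2}\le C^2\|e_n\|_{L^2}$). The two-sided estimate \eqref{DF-bd} holds only for $h\in\mathcal{S}$, and this restriction is essential, not cosmetic: $DF[\sigma](h)$ contains the term $\nabla h\cdot(DE_{\sigma D}\times B_0)$, so the upper bound needs $\|\nabla h\|_{L^2}$ controlled by $\|h\|_{L^2}$, and the lower bound is inherited from Theorem \ref{stability}, whose hypothesis \eqref{asu-sigma12} is precisely the cone condition on the difference. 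Your $e_n = T[\sigma_n]-\sigma_*$ is a difference $\alpha_n-\alpha_*$ of two elements of $\overline{\mathcal{S}}$, and $\mathcal{S}$ is convex but not symmetric: differences do not inherit the cone condition, since $\|\nabla(\alpha_n-\alpha_*)\|_{L^2}\le L(\|\alpha_n\|_{L^2}+\|\alpha_*\|_{L^2})$ gives no control by $L\|\alpha_n-\alpha_*\|_{L^2}$ when $\alpha_n$ and $\alpha_*$ are close to each other but not to zero. So the key contraction inequality is unjustified as written; one must either assume the Lipschitz stability holds for \emph{all pairs} in $\widetilde{\mathcal{S}}$ --- which is exactly how \cite{Hoop2015} phrases its hypothesis (stability as a constraint on the admissible set) --- or impose a cone condition stable under differences. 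A secondary but also real gap: the quadratic remainder bound $\|R_n\|_{L^2}\le C_R\|e_n\|_{L^2}^2$ and the operator-Lipschitz bound on $\sigma\mapsto DF^*[\sigma]$ require second-order (Lipschitz-derivative) estimates on $F$; the paper proves only first-order differentiability and the first-order bounds \eqref{diffele}--\eqref{diffeleH1}, so these cannot be cited and would have to be established from scratch. You flag this as an obstacle, correctly, but it is more than a careful revisit of \eqref{eqn:DFad-2}: without those estimates the second and third terms of your decomposition are not $O(\|e_n\|_{L^2}^2)$, and the contraction does not close.
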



%
%

\subsection{A quasi-Newton method}
It has been observed in \cite{Qiu2015} that the challenge of the Landweber iteration lies in the difficulty of evaluating the adjoint operator of the Fr\'echet derivative.
To avoid taking too many derivatives, we introduce a more efficient way to reconstruct the conductivity. This is a generalization of the quasi-Newton method proposed in \cite{Qiu2015} for the anisotropic case with known conformal class.
In the following, we describe this algorithm and prove its convergence in $\widetilde{\mathcal{S}}$.

Let $\sigma$ be the scalar conductivity distribution function and let $D$ be the known conformal class matrix-valued function. The forward operator is given by
\[
F(\sigma) = \nabla \cdot(\sigma DE_{\sigma D}\times B_0),
\]
where $E_\sigma$ satisfies the system
\[
\left\{
\begin{aligned}
  &\nabla \cdot(\sigma DE_{\sigma D}) && = 0,  & \mbox{ in } \Omega,
  \\
  &\nabla \times E_{\sigma D} && =  B_1,   & \mbox{ in } \Omega,
  \\
  & \sigma DE_{\sigma D} \cdot \nu && = 0,   &\mbox{ on } \partial\Omega.
\end{aligned}
\right.
\]

\begin{alg}\label{algorithm}~\\

\textbf{ Step 0.}  Select an initial conductivity $\sigma_1\in \widetilde{\mathcal{S}}$ and set $k=1$.

  \textbf{ Step 1.} Calculate the associated electric field $E_k$ by solving the
        boundary value problem
      \begin{equation}\label{eqn:update_E}
      \left\{
      \begin{array}{lll}
        \nabla \times E_k & = B_1, &\qquad \mbox{ in }
        \Omega,
          \\
        \nabla \cdot (\sigma_k DE_k) & =0, &\qquad \mbox{ in } \Omega,
          \\
          \sigma_k DE_k \cdot \nu &= 0 , &\qquad \mbox{ on }
            \partial\Omega .
       \end{array}
       \right.
      \end{equation}

 \textbf{ Step 2.}  Calculate the updated conductivity by solving the stationary advection-diffusion equation with the inflow boundary condition:
      \begin{equation}\label{eqn:update_sigma}
      \left\{
      \begin{array}{rll}
        \nabla \cdot (\sigma_{k+1/2} DE_{k}\times B_0) & = g , &\qquad \mbox{ in } \Omega,
          \\
        \sigma_{k+1/2} & = \sigma_* , & \qquad \mbox{ on } \partial\Omega^-,
        \end{array}
       \right.
      \end{equation}
      where $\partial\Omega^- = \{ x\in \partial \Omega \mid DE_k(x) \times B_0 \cdot \nu(x) < 0 \}$.

\textbf{ Step 3.}
Let $\sigma_{k+1}=T[\sigma_{k+1/2}]$, where $T$ is the Hilbert projection operator onto $\tilde{S}$.
 Set $k=k+1$ and go to (\ref{eqn:update_E}).
\end{alg}

\subsection{Convergence analysis}
In the algorithm above, one updates the electric field $E$ and then updates the cross-factor $\sigma$ later. Using the same argument as for proving the well-posedness, we could get the following convergence results.
\begin{theorem}\label{conv-alg}
Suppose that the cross-factor $\sigma_*\in \widetilde{\mathcal{S}}$ and $D$ satisfies Assumption \ref{asmp:sigma} and \eqref{asu-lambda}. Let
$\{\sigma_k\}$ be determined by the Algorithm \ref{algorithm}. Then for proper constants $K, L$ and $\eta$ in \eqref{eq:mathcalS} and \eqref{asu-lambda}, there exists a constant $c<1$ such that
\beq
\|\sigma_{k+1}-\sigma_*\|_{L^2(\Omega)}\leq c\|\sigma_{k}-\sigma_*\|_{L^2(\Omega)}.
\eeq

\end{theorem}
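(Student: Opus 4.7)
The plan is to mirror the stability argument of Theorem \ref{stability} at the level of a single iteration step, and then absorb the projection. Write $E_* := E_{\sigma_* D}$, $E_k := E_{\sigma_k D}$ and $w := \sigma_{k+1/2}-\sigma_*$. Since the data satisfies $g = F(\sigma_*) = \nabla\cdot(\sigma_* D E_* \times B_0)$ while $\sigma_{k+1/2}$ is defined by \eqref{eqn:update_sigma}, subtracting the two equations gives the linear transport problem
\[
\nabla\cdot(w\,DE_k\times B_0)= -\nabla\cdot\bigl(\sigma_* D(E_k-E_*)\times B_0\bigr)\quad\text{in }\Omega,\qquad w=0\text{ on }\partial\Omega^-,
\]
the second relation coming from the inflow condition together with the hypothesis that $\sigma$ is prescribed on $\partial\Omega$.

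The coercivity step is then to test this against $w$. Using $w\,\beta\cdot\nabla w=\tfrac12\beta\cdot\nabla(w^2)$ with $\beta=DE_k\times B_0$ and integrating by parts,
\[
\int_\Omega w\,\nabla\cdot(wDE_k\times B_0)\,dx=\tfrac12\int_\Omega w^2\,\nabla\cdot(DE_k\times B_0)\,dx+\tfrac12\int_{\partial\Omega^+}w^2\,\beta\cdot\nu\,dS,
\]
and the boundary integral is nonnegative by definition of $\partial\Omega^+$, so it can be discarded. The vector identity $\nabla\cdot(DE_k\times B_0)=B_0\cdot\nabla\times(DE_k)$, combined with $\nabla\times E_k=B_1$, $B_0\cdot B_1=1$, the $C^{1,\gamma}$ bound \eqref{C1}, and \eqref{asu-lambda}, yields $\nabla\cdot(DE_k\times B_0)=1+B_0\cdot\nabla\times((D-I)E_k)=1+O(\eta)$. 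Hence the left-hand side is bounded below by $\tfrac{1-C\eta}{2}\|w\|_{L^2}^2$.

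For the right-hand side I split the divergence as $\sigma_*\,\nabla\cdot(D(E_k-E_*)\times B_0)+\nabla\sigma_*\cdot(D(E_k-E_*)\times B_0)$. The second piece is controlled by $KC\|w\|_{L^2}\|\sigma_k-\sigma_*\|_{L^2}$ via \eqref{gragsigma} and \eqref{diffele}. The first piece equals $\sigma_*\,B_0\cdot\nabla\times((D-I)(E_k-E_*))$ since $E_k-E_*$ is curl-free, and is therefore of order $\eta\|E_k-E_*\|_{H^1}$, which \eqref{diffeleH1} together with \eqref{asu-sigma12} bounds by $C\eta(1+L)\|\sigma_k-\sigma_*\|_{L^2}$. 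Collecting,
\[
\|\sigma_{k+1/2}-\sigma_*\|_{L^2}\le\frac{2\bigl(KC+C\eta(1+L)\bigr)}{1-C\eta}\,\|\sigma_k-\sigma_*\|_{L^2},
\]
and choosing $K$ and $\eta$ small enough (depending on $L$, $\lambda$, and $\Omega$) makes the multiplier strictly less than $1$.

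The projection step is then straightforward: $T$ is the Hilbert projection onto the closed convex set $\overline{\widetilde{\mathcal{S}}}$, hence a nonexpansion in $L^2$, and since $\sigma_*\in\widetilde{\mathcal{S}}$ we have $T[\sigma_*]=\sigma_*$, so
\[
\|\sigma_{k+1}-\sigma_*\|_{L^2}=\|T[\sigma_{k+1/2}]-T[\sigma_*]\|_{L^2}\le\|\sigma_{k+1/2}-\sigma_*\|_{L^2},
\]
which closes the contraction. The main technical obstacle I anticipate is confirming that the transport coefficient $\beta$ is regular enough for the divergence and boundary manipulations to be legitimate (provided by \eqref{C1}) and ensuring that the outflow boundary integral genuinely has the right sign so that it can be dropped; the latter is built into the definition of $\partial\Omega^\pm$ and the inflow condition in \eqref{eqn:update_sigma}.
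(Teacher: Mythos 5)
Your proof is correct and follows essentially the same route as the paper's: subtract the two transport equations, test with $\sigma_{k+1/2}-\sigma_*$, split $D=I+(D-I)$, estimate the resulting terms via \eqref{gragsigma}, \eqref{diffele}, \eqref{diffeleH1} and the smallness assumptions \eqref{asu-lambda}, \eqref{asu-sigma12}, and finish with nonexpansiveness of the Hilbert projection onto the convex set $\overline{\widetilde{\mathcal{S}}}$. If anything, your handling of the boundary contribution is more careful than the paper's (which asserts the integration by parts yields exactly $\tfrac12\|\sigma_{k+1/2}-\sigma_*\|_{L^2}^2$ without comment): by keeping the full coefficient $\beta = DE_k\times B_0$ together in the coercivity step, the inflow condition $w=0$ on $\partial\Omega^-$ and the sign of $\beta\cdot\nu$ on $\partial\Omega^+$ make discarding the boundary integral legitimate.
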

\begin{proof}
Note that $T$ is a projection and $\widetilde{\mathcal{S}}$ is convex. Then, we have that $T$ is nonexpansive and $\|\sigma_{k+1}-\sigma_*\|\leq \|\sigma_{k+1/2}-\sigma_*\|$ follows.
It is left to estimate $\|\sigma_{k+1/2}-\sigma_*\|$.

First we subtract $\nabla\cdot(\sigma_*DE_k\times B_0)$ from both sides of \eqref{eqn:update_sigma}to get
\beq
\nabla\cdot((\sigma_{k+1/2}-\sigma_*)DE_k\times B_0)=\nabla\cdot(\sigma_*D(E_*-E_k)\times B_0).
\eeq
Multiplying by $\sigma_{k+1/2}-\sigma_*$ and integrating over $\Omega$ yields
\beq
\int_{\Omega}(\sigma_{k+1/2}-\sigma_*)\nabla\cdot((\sigma_{k+1/2}
-\sigma_*)DE_k\times B_0)=\int_{\Omega}(\sigma_{k+1/2}-\sigma_*)\nabla\cdot(\sigma_*D(E_*-E_k)\times B_0).
\eeq
We split the terms into
\ben
&&\int_{\Omega}(\sigma_{k+1/2}-\sigma_*)\nabla\cdot((\sigma_{k+1/2}
-\sigma_*)E_k\times B_0) + (\sigma_{k+1/2}-\sigma_*)\nabla\cdot((\sigma_{k+1/2}
-\sigma_*)(D-I)E_k\times B_0)\nn\\
&=&\int_{\Omega}(\sigma_{k+1/2}-\sigma_*)\nabla\sigma_*\cdot(D(E_*-E_k)\times B_0) + (\sigma_{k+1/2}-\sigma_*)\sigma_*\nabla\cdot((D-I)(E_*-E_k)\times B_0). \nn
\een

Integrating by parts gives $\int_{\Omega}(\sigma_{k+1/2}-\sigma_*)\nabla\cdot((\sigma_{k+1/2}
-\sigma_*)E_k\times B_0) = \frac{1}{2}\|\sigma_{k+1/2}-\sigma_*\|_{L^2(\Omega)}^2$.

The second term in the left hand side can be estimated as follows:
\ben
&&\left|\int_{\Omega} (\sigma_{k+1/2}-\sigma_*)\nabla\cdot((\sigma_{k+1/2}
-\sigma_*)(D-I)E_k\times B_0)\right|\nn\\
 &=& \left|\int_{\Omega} (\sigma_{k+1/2}-\sigma_*)^2\nabla\cdot((D-I)E_k\times B_0)\right|\nn\\
 &\leq &C\eta \|\sigma_{k+1/2}-\sigma_*\|_{L^2(\Omega)}^2.\nn
\een
Here  the smallness of $D-I$ and the $C^1$ property of $E_k$ \eqref{C1} have been used.

For the right hand side, we have
\beq
\left|\int_{\Omega}(\sigma_{k+1/2}-\sigma_*)\nabla\sigma_*\cdot(D(E_*-E_k)\times B_0)\right|\leq CK\|\sigma_{k+1/2}-\sigma_*\|_{L^2(\Omega)}\|\sigma_{k}-\sigma_*\|_{L^2(\Omega)},\nn
\eeq
and
\ben
\left|\int_{\Omega}(\sigma_{k+1/2}-\sigma_*)\sigma_*\nabla\cdot((D-I)(E_*-E_k)\times B_0)\right| & \leq & C\eta \|\sigma_{k+1/2}-\sigma_*\|_{L^2(\Omega)}\|\sigma_{k}-\sigma_*\|_{H^1(\Omega)}\nn\\
&\leq & C(L+1)\eta \|\sigma_{k+1/2}-\sigma_*\|_{L^2(\Omega)}\|\sigma_{k}-\sigma_*\|_{L^2(\Omega)}.\nn
\een
Here we have used property \eqref{diffeleH1} and the fact that $\sigma_k \in \tilde{S}$.

With the above estimates, as we did in Theorem \ref{stability}, let $KC+\eta (L+2)C <\frac{1}{2}$.  We derive
\beq
\|\sigma_{k+1/2}-\sigma_*\|_{L^2(\Omega)}\leq c\|\sigma_{k}-\sigma_*\|_{L^2(\Omega)},
\eeq
 where $c$ is a constant smaller than $1$. Hence,
\beq
\|\sigma_{k+1}-\sigma_*\|_{L^2(\Omega)}\leq c\|\sigma_{k}-\sigma_*\|_{L^2(\Omega)},
\eeq
which proves the theorem.
\end{proof}

\subsection{Numerical experiments}
In this section, we present some numerical experiments to validate the reconstruction method proposed in Algorithm~\ref{algorithm} and evaluate its robustness to measurement noise.
To simplify the computation, we convert this three-dimensional problem into an equivalent two-dimensional problem assuming that the domain of interest is the cube $[0,1]^3$ and the conductivity and the diffusion tensors are invariant along the third dimension.  Moreover, we assume that
the diffusion tensor $D$ is of form
\beq
\label{diff-mat}
D=\begin{pmatrix}
d_{11} & d_{12} &  0 \\
d_{21} & d_{22} &  0 \\
 0     &    0   &  1
\end{pmatrix},
\eeq
where $d_{ij}$'s are constant plus some perturbations as shown in Figure~\ref{fig:diff-matrix}. The non-zero part of the perturbation functions are used to characterize the anisotropy.

We use a uniform finite element triangular mesh over the two-dimensional unit square. The number of cells is $256$ in each direction. The total number of triangles and vertices are $2^{17}$ and $257^2$, respectively. Both the elliptic equation with a Neumann boundary condition and the stationary advection-diffusion equation are solved using the finite element method of first order implemented with FEniCS \cite{Logg2012}. The internal data $F(\sigma)$ used for the reconstruction are synthetic data that are generated using the same solver.
These data are commonly used to refer to the ``noise-free'' data, although they may contain some numerical errors.

For all examples, we use the same initial guess, constant function $0.2$, and the same true cross-property factor (Figure~\ref{fig:sigma}) given by
\[
\sigma(x_1,x_2) =
\left\{
\begin{aligned}
  0.6 & , \quad r \leq 0.12,
  \\
  0.4 s^3(6s^2-15s+10) + 0.2 & , \quad 0.12 < r <0.46,
  \\
  0.2 & , \quad \mbox{others },
\end{aligned}
\right.
\]
where $r(x_1,x_2) = \sqrt{(x_1-0.5)^2 + (x_2-0.5)^2}$ and $s= \frac{0.46 - r}{0.12}$.
The internal data generated with the diffusion tensor as in \eqref{diff-mat} is shown in Figure~\ref{fig:data}. We also produce the data in the isotropic case (Figure~\ref{fig:idt-data}). The effect of the anisotropy can be observed clearly. The error-decay of the reconstruction with the noise-free data is shown in Figure~\ref{fig:error-behav}. The final error is smaller than $2\times 10^{-3}$. We only display the last iterate here.

This inverse problem bears a Lipschitz type stability and we avoid lowering the regularity of the cross-property factor using Algorithm~\ref{algorithm}. Therefore, the robustness of the reconstruction scheme to noisy data is expected. We perform the numerical tests with noisy data by perturbing the internal functional $g$ in the following way:
\[
g_\delta = g  + \delta \|g\|\frac{w}{\|w\|},
\]
where $w$ is a function taking values uniformly distributed in $[-1, 1]$ and $\delta$ is the noise level.

\begin{figure}
\centering
\subfloat[$d_{11}$]{\includegraphics[width=.33\textwidth,bb=0 0 480 360]{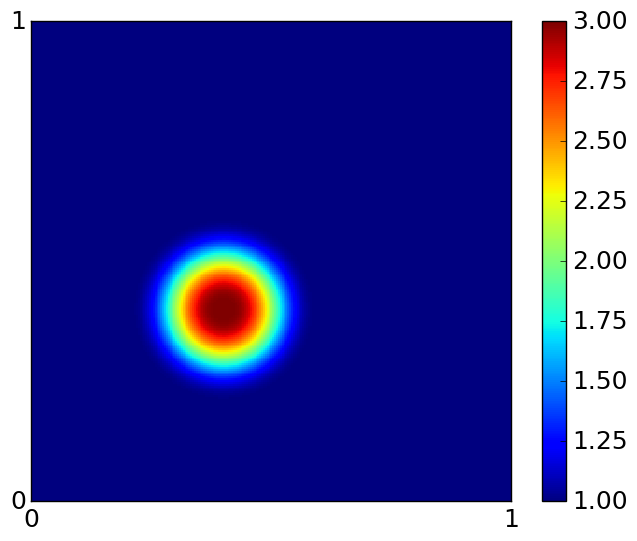}}
\subfloat[$d_{12}$]{\includegraphics[width=.33\textwidth,bb=0 0 480 360]{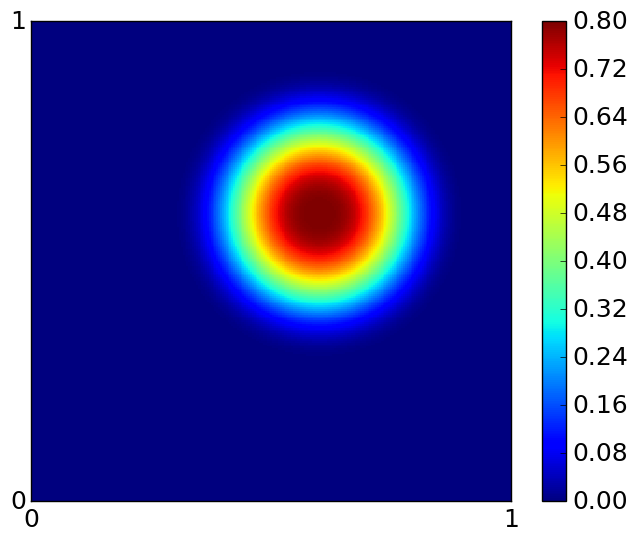}}
\subfloat[$d_{22}$]{\includegraphics[width=.33\textwidth,bb=0 0 480 360]{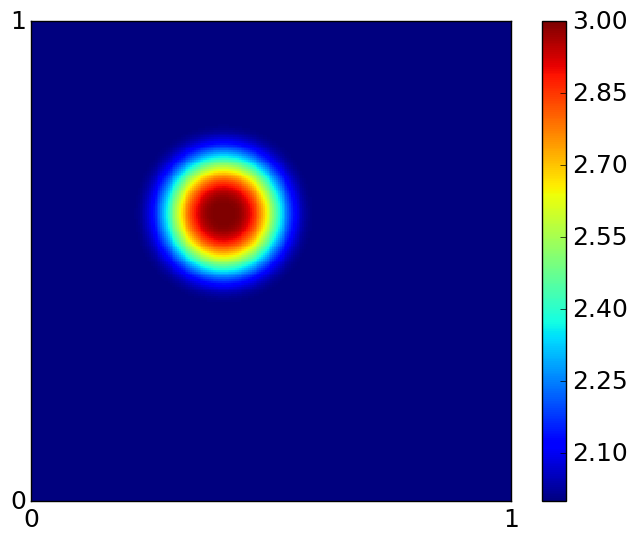}}
\caption{Components of the diffusion tensor.}
\label{fig:diff-matrix}
\end{figure}

\begin{figure}
\centering
\subfloat[True conductivity]{\includegraphics[width=.33\textwidth,bb=0 0 480 360]{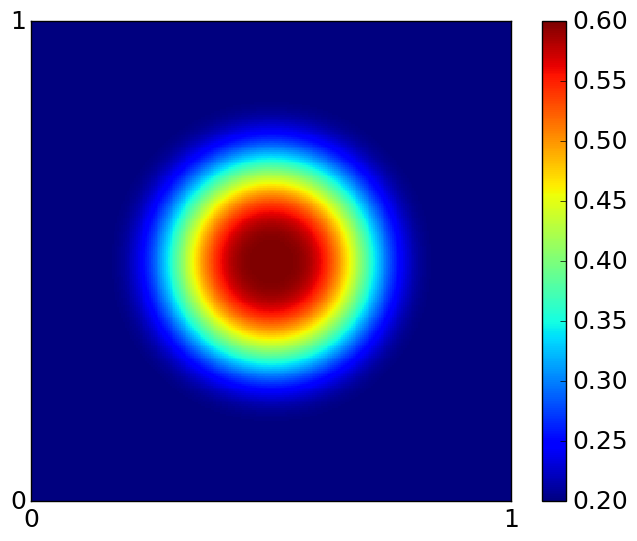} \label{fig:sigma}}
\subfloat[Internal data]{\includegraphics[width=.33\textwidth,bb=0 0 480 360]{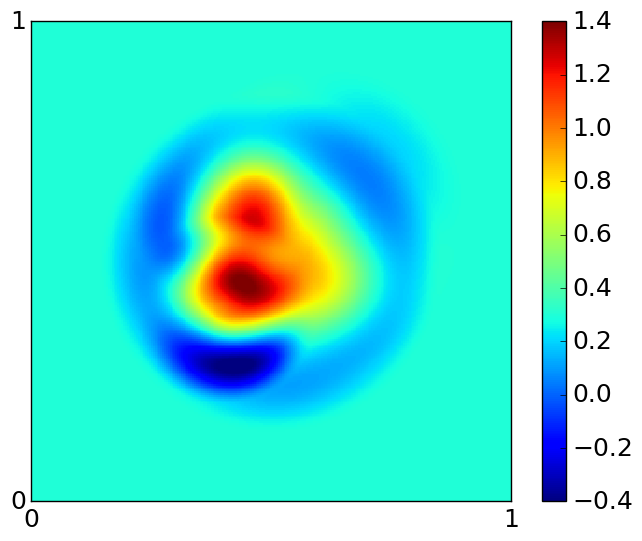}\label{fig:data}}
\subfloat[Internal data with isotropic diffusion tensor]{\includegraphics[width=.33\textwidth,bb=0 0 480 360]{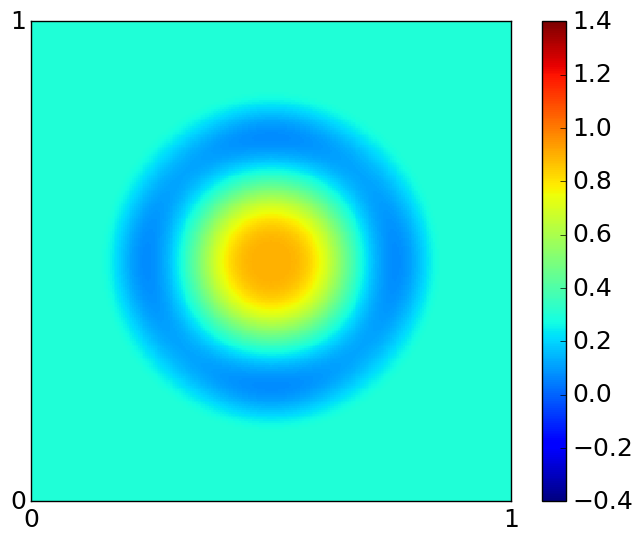}\label{fig:idt-data}}
\caption{Conductivity distribution and the internal data.}
\label{fig:sig-data}
\end{figure}

Figure~\ref{fig:noisy-recon-rts} shows the noisy data with noise level $\delta = 24\%$ and the reconstructed cross-property factor. We do not use further regularization techniques since the regularization method may depend on the type of the noise in practical cases. But the projection onto the feasible space acts as a regularization scheme.

\begin{figure}
\centering
\subfloat[Noisy data]{\includegraphics[width=.33\textwidth,bb=0 0 480 360]{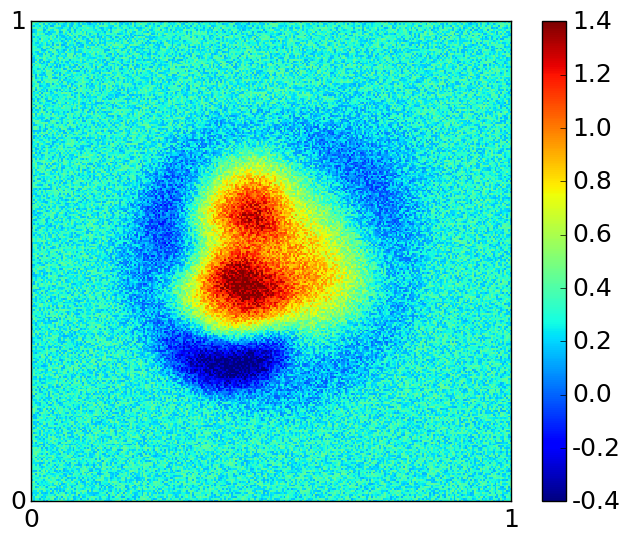} }
\subfloat[Reconstructed $\sigma$]{\includegraphics[width=.33\textwidth,bb=0 0 480 360]{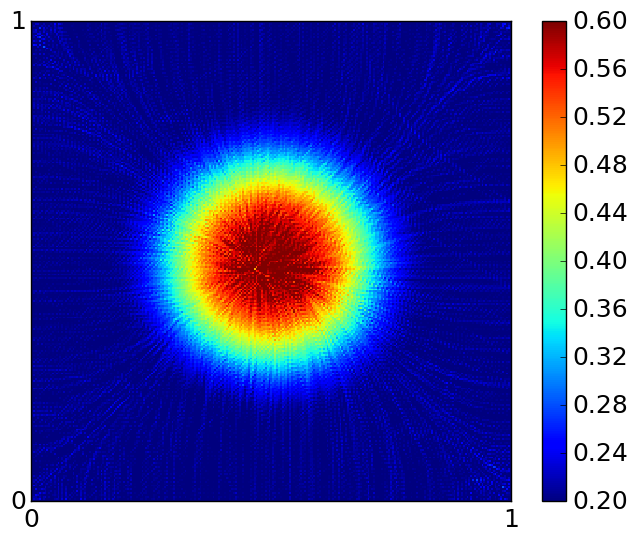}}
\caption{Reconstruction with noisy data ($\delta = 24\%$).}
\label{fig:noisy-recon-rts}
\end{figure}

\begin{figure}
\centering
\subfloat[Error decay with noise-free data.]{
\includegraphics[width=.45\textwidth,bb=0 0 500 450]{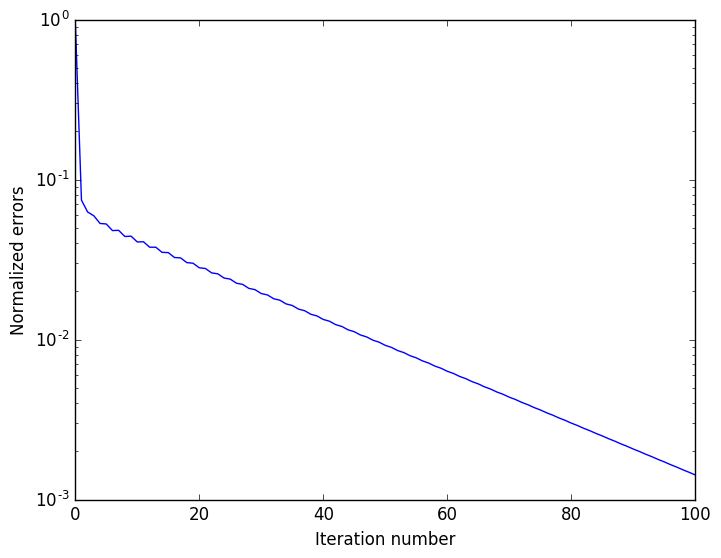}
\label{fig:error-behav}
}
\subfloat[Reconstruction error versus noise level.]{
\includegraphics[width=.45\textwidth,bb=0 0 500 450]{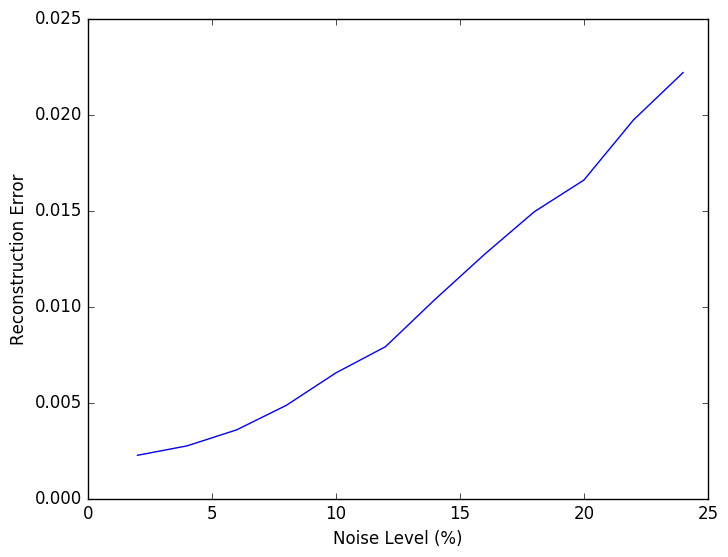}
\label{fig:recon-noise-err}
}
\caption{Reconstruction error.}
\label{fig:recon-rts}
\end{figure}


\section{Concluding remarks}

In this paper, we have considered the reconstruction of an anisotropic
conductivity from MAT-MI data which is conformal to a known diffusion tensor measured from
DTI. The data is the internal functional $\nabla\cdot (\sigma
E_{\sigma}\times B_0)$ throughout the domain.
We have analyzed the linearization of the problem and the stability of the
inversion. A local Lipschitz type stability estimate has been established for a
certain class of anisotropic conductivities. A quasi-Newton type reconstruction method with
projection has been introduced and its convergence has been proved. Numerical
experiments demonstrate the effectiveness of the proposed approach and its
robustness to noise.

In light of the numerical experiments, we have the following observations.
\begin{enumerate}
  \item The effect of the electrical anisotropy is remarkably significant
and can not be neglected in the reconstruction of electrical conductivity in MAT-MI.
  \item There is still a room for improvement of the admissible class of
conductivities. The convergence of the proposed algorithm has been observed
for more general cases.
  \item For the inversion with noisy data, oscillation in the reconstructed
conductivity is observed. Regularization methods prompting sparsity, such
as total variation regularization may be employed for a more
stable reconstruction.
\end{enumerate}

The present work leads to many important unanswered questions.  First, is it possible to determine an anisotropic conductivity $\gamma (x)$ from MAT-MI data alone?  Recall that each static field $B_0$ leads to a new measurement.  It would be important to know how many static fields and their associated measurements are needed to determine $\gamma(x)$ uniquely. Second, how can the present approach be modified to address this very question? It is clear that the current analysis relies on vector identities which do not easily generalize to handle anisotropic conductivity.  Finally, if we can answer these questions, then how do we devise stable and accurate computational methods to image anisotropic conductivity? We therefore must accept the conclusion that the present work is the first step towards a fully MAT-MI method of imaging anisotropic conductivity.

%
\small
\def\cprime{$'$}
  \providecommand{\noopsort}[1]{}\def\ocirc#1{\ifmmode\setbox0=\hbox{$#1$}\dimen0=\ht0
  \advance\dimen0 by1pt\rlap{\hbox to\wd0{\hss\raise\dimen0
  \hbox{\hskip.2em$\scriptscriptstyle\circ$}\hss}}#1\else {\accent"17 #1}\fi}

\end{document}